\newtheorem{theorem}{Theorem}[section]
\tikzstyle{vertex}=[circle, draw, inner sep=0pt, minimum size=1pt]
\theoremstyle{definition}
\newtheorem{rk}{Remark}[section]
\numberwithin{equation}{section}
\begin{document}

\title{Adjacency Matrix and Energy of the Line Graph of $\Gamma(\mathbb{Z}_n)$}

\author{Sheela Suthar}
\address{Department of Mathematics\\ Baba Farid Group of Institutions, Bathinda, Punjab - 151 001, India}
\curraddr{}
\email{sheelasuthar@gmail.com}
\thanks{}

\author{Om Prakash*}
\address{Department of Mathematics \\
IIT Patna, Bihta campus, Bihta-801 106, India}
\curraddr{}
\email{om@iitp.ac.in}
\thanks{}

\subjclass[2010]{05C20, 05C25, 05C78.}

\keywords{Commutative ring; Zero-divisor graph; Line Graph; Adjacency Matrix; Neighborhood; Energy; Wiener index}

\date{}

\maketitle
\begin{abstract}
Let $\Gamma(\mathbb{Z}_n)$ be the zero divisor graph of the commutative ring $\mathbb{Z}_n$ and $L(\Gamma(\mathbb{Z}_n))$ be the line graph of $\Gamma(\mathbb{Z}_n)$. In this paper, we discuss about the neighborhood of a vertex, the neighborhood number and the adjacency matrix of $L(\Gamma(\mathbb{Z}_n))$. We also study Wiener index and energy of $L(\Gamma(\mathbb{Z}_n))$, where $n=pq$, $p^2$ respectively for $p$ and $q$ are primes. Moreover, we give MATLAB coding of our calculations.

\end{abstract}

\section{INTRODUCTION}

Let $\mathbb{Z}_n$ be the commutative ring of residue classes $modulo$ a positive integer $n$ and $Z(\mathbb{Z}_n)$, the set of zero-divisors of $\mathbb{Z}_n$, i. e., $Z(\mathbb{Z}_n)$= $\{x\in \mathbb{Z}_n : xy= 0$, for some $y\in \mathbb{Z}_n\}$. The zero-divisor graph of $\mathbb{Z}_n$ is an undirected graph $\Gamma(\mathbb{Z}_n)$ with vertex set $Z(\mathbb{Z}_n)$ such that distinct vertices $x$ and $y$ are adjacent if and only if $xy= 0$. The line graph $L(\Gamma(\mathbb{Z}_n))$ of $\Gamma(\mathbb{Z}_n)$ is defined to be the graph whose vertex set constitutes of the edges of $\Gamma(\mathbb{Z}_n)$, where two vertices are adjacent if the corresponding edges have a common vertex in $\Gamma(\mathbb{Z}_n)$. The importance of line graphs stems from the fact that the line graph transforms the adjacency relation on edges to adjacency relation on vertices \cite{hph}.\\

The neighborhood (or open neighborhood) $N_{L(\Gamma(\mathbb{Z}_n))}(v)$ of a vertex $v$ of $L(\Gamma(\mathbb{Z}_n))$ is the set of vertices adjacent to $v$. The closed neighborhood \\ $N_{L(\Gamma(\mathbb{Z}_n))}[v]$ of a vertex $v$ is the set  $N_{L(\Gamma(\mathbb{Z}_n))}(v) \bigcup\{v\}$. A set of vertices $A$ is a neighborhood set if $L(\Gamma(\mathbb{Z}_n)) = \bigcup\limits_{v\in A}<N_{L(\Gamma(\mathbb{Z}_n))}[v]>$. The neighborhood number $n(L(\Gamma(\mathbb{Z}_n)))$ of a graph $L(\Gamma(\mathbb{Z}_n))$ is equal to the minimum number of vertices in a neighborhood set of $L(\Gamma(\mathbb{Z}_n))$. Moreover, $\Gamma(\mathbb{Z}_n)$ is the zero-divisor graph of a commutative ring $\mathbb{Z}_n$ with vertex set $Z(Z_n)$, therefore, $\Gamma(\mathbb{Z}_n)$ is always connected and so is $L(\Gamma(\mathbb{Z}_n))$ and hence $N_{L(\Gamma(\mathbb{Z}_n))}(V)$ is equal to the cardinality of $V$. Here, $\Delta(L(\Gamma(\mathbb{Z}_n)))$ and $\delta(L(\Gamma(\mathbb{Z}_n)))$ represent the maximum degree and minimum degree of $L(\Gamma(\mathbb{Z}_n))$ respectively. The adjacency matrix of $\Gamma(\mathbb{Z}_n)$ is defined by a matrix $A= [a_{ij}]$, where $a_{ij}= 1$, if $v_i$ and $v_j$ joined by an edge for any vertices $v_i$ and $v_j$ of $L(\Gamma(\mathbb{Z}_n))$ and $a_{ij}= 0$, otherwise. For basic definitions and results, we refer \cite{dp99,ks92,par11}. \\

In this paper, we calculate neighborhood and adjacency matrix for $L(\Gamma(\mathbb{Z}_n))$ in some cases. Also, we discuss the energy and Wiener index for the same. In graph theory, the energy of a graph is the sum of absolute values of the eigenvalues of the adjacency matrix. If we denote the length of shortest path between every pair of vertices $x, y \in V(L(\Gamma(\mathbb{Z}_n)))$ by $d(x, y)$, then the Wiener index \[ W(T(\Gamma(\mathbb{Z}_n))) = \frac{1}{2} \sum_{x,y \in V} d(x,y).\] An integral graph is a graph whose all eigenvalues are integers. In other words, a graph is an integral graph if all the roots of its characteristic equation are integers.

\section{Neighborhood and Adjacency Matrix for $pq$ and $p^2$}

Here, first we illustrate simple examples for neighborhood of vertices and adjacency matrix  for $n = pq$ and then present the generalize results.\\

\noindent {\bf Case 1:} When $p= 2$ and $q= 3$, we have $\mathbb{Z}_6$.\\
The ring $\mathbb{Z}_6$ has $3$ non-zero zero divisors. In this case, line graph has set of vertices $V = \{(2,3), (3,4)\}$ and line graph of $G = L(\Gamma(\mathbb{Z}_6))$ is \\

\begin{figure}[h]
	\centering
	\includegraphics[width=2in, height=0.5in]{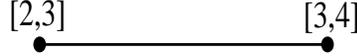}
	\caption{$L(\varGamma(\mathbb{Z}_6))$}
	\label{fig:1}
\end{figure}

The closed neighborhoods of the vertices are $N_G[(2,3)] = $ $\{(2,3), (3,4)\}$; $N_G[(3,4)] = $ $\{(3,4), (2,3)\}$. The neighborhood of $V$ is $N_G[(V)] = $ $\{(2,3), (3,4)\}$. The maximum degree is $\Delta(G) = 1$ and minimum degree is $\delta(G) = 1$. The adjacency matrix for the line graph of $G = L(\Gamma(\mathbb{Z}_6))$ is

\[
M_1 =
\left[ {\begin{array}{cc}
	0 & 1 \\
	1 & 0
	\end{array} } \right]_ {2\times 2}.
\] \\

{\bf Properties of adjacency matrix $M_1$:}
\begin{enumerate}
	\item{The determinant of the adjacency matrix $M_1$ corresponding to $G = L(\Gamma(\mathbb{Z}_6))$ is non-zero.}
	\item{The rank of the adjacency matrix $M_1$ corresponding to $G = L(\Gamma(\mathbb{Z}_6))$ is $2$.}
	\item{The adjacency matrix $M_1$ corresponding to $G = L(\Gamma(\mathbb{Z}_6))$ is symmetric and non-singular.}
	\item{ Trace of the adjacency matrix $M_1$ corresponding to $G = L(\Gamma(\mathbb{Z}_6))$ is zero.}
\end{enumerate}

\noindent {\bf Case 2:} When $p= 3$ and $q= 5$, we have $\mathbb{Z}_{15}$.

The ring $\mathbb{Z}_{15}$ has $6$ non-zero zero divisors. In this case, line graph has the set of vertices $V = \{(3,5), (9,5), (6,5), (5,12), (6,10), (3,10), (9,10), (10, 12)\}$ and line graph of $G = L(\Gamma(\mathbb{Z}_{15}))$ is given by
\begin{figure}[h]
	\centering
	\includegraphics[width=3in, height=2.1in]{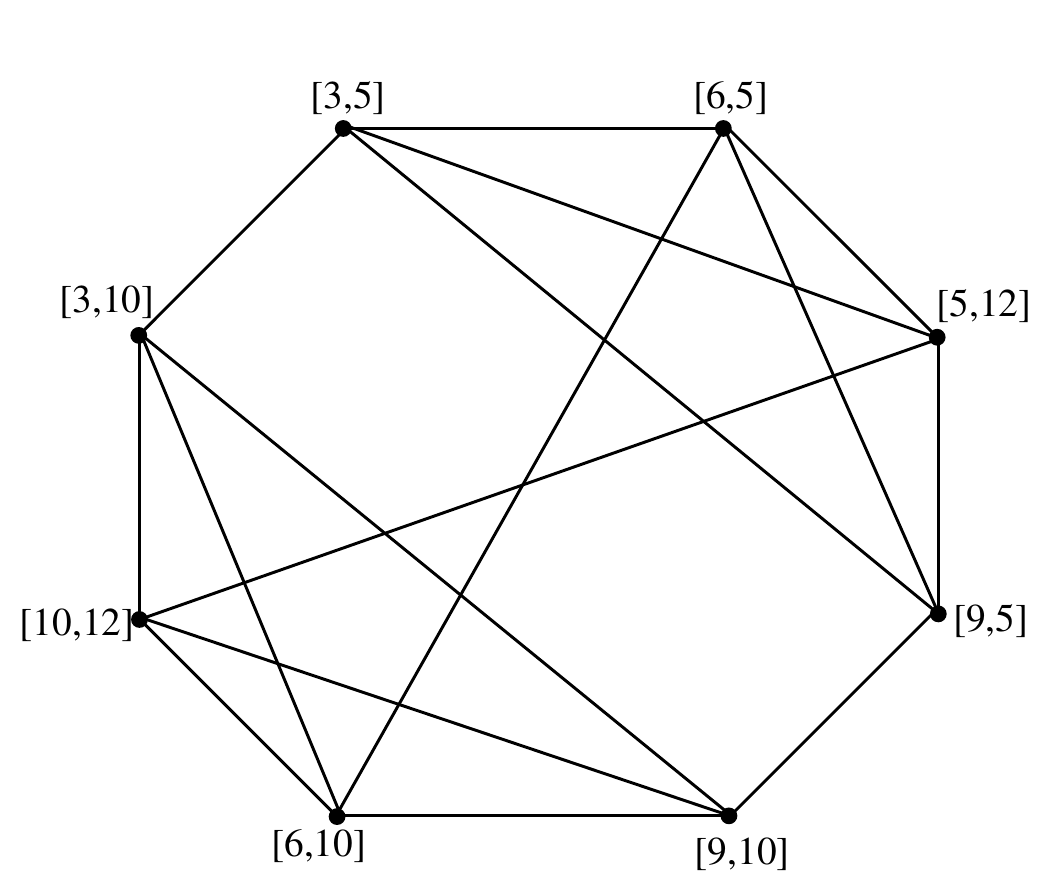}
	\caption{$L(\Gamma(\mathbb{Z}_{15}))$}
	\label{fig:2}
\end{figure}

The closed neighborhoods of the vertices are as follows:\\ $N_G[(3,5)]$ =  $\{(3,5), (6,5),
(9,5), (5,12), (3,10)\}$; $N_G[(9,5)]$ =  $\{(9,5), (6,5),\\ (3,5), (9,10), (5,12)\}$;
$N_G[(6,5)]$ = $\{(6,5), (3,5), (9,5), (6,10), (5,12)\}$;\\
$N_G[(5,12)]$ = $\{(5,12), (9,5), (6,5), (3,5), (10,12)\}$;
$N_G[(6,10)]$ = $\{(6,10),\\ (6,5), (9,10), (3,10), (10,12)\}$;
$N_G[(3,10)]$ = $\{(3,10), (10,12), (6,10), (9,10),\\ (3,5)\}$;
$N_G[(9,10)]$ = $\{(9,10), (10,12), (6,10), (3,10), (9,5)\}$;
$N_G[(10,12)]$ = \\$\{(10,12), (5,12), (6,10), (3,10), (9,10)\}$.
The neighborhood of $V$ is given by\\ $N_G[(V)] = $ $\{(3,5), (9,5), (6,5), (5,12), (6,10), (3,10), (9,10), (10, 12)\}$.\\ The maximum degree is $\Delta(G) = 4$ and minimum degree is $\delta(G) = 4$. The adjacency matrix for the line graph of $G = L(\Gamma(\mathbb{Z}_{15}))$ is \\
\[
M_1=
\begin{bmatrix}
m_1 & \hspace{1cm}  m_2 \\
m_2 & \hspace{1cm}  m_1
\end{bmatrix}_{8 \times 8}  \quad where, \hspace{.2cm}
m_1=
\begin{bmatrix}
0 & 1 & 1 & 1 \\
1 & 0 & 1 & 1 \\
1 & 1 & 0 & 1 \\
1 & 1 & 1 & 0
\end{bmatrix}_{4 \times 4}
\]

$m_2=
\begin{bmatrix}
1 & 0 & 0 & 0 \\
0 & 1 & 0 & 0 \\
0 & 0 & 1 & 0 \\
0 & 0 & 0 & 1
\end{bmatrix}_{4 \times 4}$, where $m_1$ is Symmetric matrix and $m_2$ is $4\times 4$ identity matrix.\\

{\bf Properties of adjacency matrix $M_1$ corresponding to $G = L(\Gamma(\mathbb{Z}_{15}))$:}\\

\begin{enumerate}
	\item{The determinant of the adjacency matrix $M_1$ corresponding to $G = L(\Gamma(\mathbb{Z}_{15}))$ is zero.}
	
	\item{The rank of the adjacency matrix $M_1$ corresponding to $G = L(\Gamma(\mathbb{Z}_{15}))$ is $5$.}
	
	\item{The adjacency matrix $M_1$ corresponding to $G = L(\Gamma(\mathbb{Z}_{15}))$ is symmetric and singular.}
	
	\item{ Trace of the adjacency matrix $M_1$ corresponding to $G = L(\Gamma(\mathbb{Z}_{15}))$ is zero.}
\end{enumerate}

\noindent {\bf Case 3:} When $p= 5$ and $q= 5$, we have $\mathbb{Z}_{25}$.\\
The ring $\mathbb{Z}_{25}$ has $4$ non-zero zero-divisors. In this case, line graph has $6$ vertices $V = \{(5,10),\\ (5,15), (5,20), (10,15), (10,20), (15,20)\}$ and line graph $G = L(\varGamma(\mathbb{Z}_{25}))$ is

\begin{figure}[h]
	\centering
	\includegraphics[width=2.5in, height=1.9in]{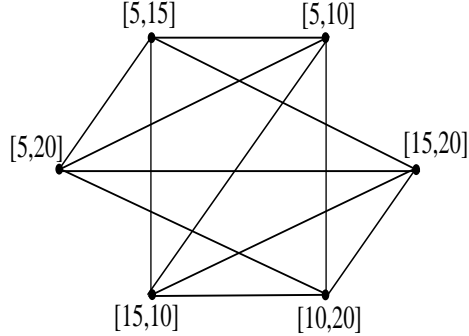}
	\caption{$L(\varGamma(\mathbb{Z}_{25}))$}
	\label{fig:4}
\end{figure}

The closed neighborhoods of the vertices are $N_G[(5,10)]$ =  $\{(5,10), (5,15),\\
(5,20), (10,15),(10,20)\}$; $N_G[(5,15)]$ =  $\{(5,10), (5,15), (5,20), (10,15),(15,20)\}$;\\ $N_G[(5,20)]$ =  $\{(5,10), (5,15), (5,20), (10,20), (15,20)\}$; $N_G[(10,15)]$ =  $\{(5,10),\\
(5,15), (10,20), (10,15), (15,20)\}$; $N_G[(10,20)]$ =  $\{(5,10), (5,20), (10,15), (10,20),\\
(15,20)\}$; $N_G[(15,20)]$ =  $\{(5,15), (5,20), (10,15),(10,20), (15,20)\}$. The neighborhood of the vertex set $V$ is given by $N_G[(V)] = $ $\{(5,10), (5,15), (5,20), (10,15),\\ (10,20), (15,20)\}$. The maximum degree is $\Delta(G) = 4$ and minimum degree is $\delta(G) = 4$. Therefore, the adjacency matrix for the line graph of $G = L(\Gamma(\mathbb{Z}_{25}))$ is \\
\[
M_1=
\begin{bmatrix}
m_1 & \hspace{1cm}  m_2 \\
m_2 & \hspace{1cm}  m_1
\end{bmatrix}_{6 \times 6}  \quad where, \hspace{.2cm}
m_1=
\begin{bmatrix}
0 & 1 & 1  \\
1 & 0 & 1  \\
1 & 1 & 0
\end{bmatrix}_{3 \times 3}
\]

$m_2=
\begin{bmatrix}
1 & 1 & 0  \\
1 & 0 & 1 \\
0 & 1 & 1
\end{bmatrix}_{3 \times 3}$, $m_1$ and $m_2$ are Symmetric  $3\times 3$ matrix.\\\\

{\bf Properties of adjacency matrix $M_1$ corresponding to $G = L(\Gamma(\mathbb{Z}_{25}))$:}\\

 \begin{enumerate}
	\item{ The determinant of the adjacency matrix $M_1$ corresponding to $G = L(\Gamma(\mathbb{Z}_{25}))$ is zero.}
	
	\item{The rank of the adjacency matrix $M_1$ corresponding to $G = L(\Gamma(\mathbb{Z}_{25}))$ is $3$.}
	
	\item{The adjacency matrix $M_1$ corresponding to $G = L(\Gamma(\mathbb{Z}_{25}))$ is symmetric and singular.}
	
	\item{ Trace of the adjacency matrix $M_1$ corresponding to $G = L(\Gamma(\mathbb{Z}_{25}))$ is zero.}
\end{enumerate}

\section{Energy and Wiener index for $pq$ and $p^2$}

\begin{theorem} If $p = 2$ and $q$ is an odd prime number, then energy of $L(\Gamma(\mathbb{Z}_{2q}))$ is $2q - 4$.
\end{theorem}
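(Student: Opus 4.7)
The plan is to identify the structure of $\Gamma(\mathbb{Z}_{2q})$ explicitly and then pass to its line graph, which will turn out to be a complete graph whose spectrum is standard.

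First I would enumerate the non-zero zero divisors of $\mathbb{Z}_{2q}$. Since $2$ and $q$ are the only prime divisors of $n = 2q$, the non-zero zero divisors split into two classes: the $q-1$ even elements $2, 4, \ldots, 2(q-1)$, and the singleton $\{q\}$. Next I would determine the adjacency relation in $\Gamma(\mathbb{Z}_{2q})$. For any even $2i$ with $1 \le i \le q-1$, the product $2i \cdot q = 2iq \equiv 0 \pmod{2q}$, so $q$ is adjacent to every even zero divisor. For two even zero divisors $2i, 2j$, the product $4ij$ is $\equiv 0 \pmod{2q}$ iff $ij \equiv 0 \pmod{q}$, which fails since $1 \le i,j \le q-1$ and $q$ is prime. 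Hence $\Gamma(\mathbb{Z}_{2q}) \cong K_{1,q-1}$, the star graph with central vertex $q$.

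The key structural step is then the well-known identity $L(K_{1,m}) \cong K_m$: the $q-1$ edges of the star all share the vertex $q$, so every two of them are adjacent in the line graph. Hence $L(\Gamma(\mathbb{Z}_{2q})) \cong K_{q-1}$. Finally I would invoke the standard spectrum of the complete graph: the adjacency matrix of $K_{q-1}$ is $J - I$ (where $J$ is the all-ones matrix), which has eigenvalues $q-2$ with multiplicity $1$ and $-1$ with multiplicity $q-2$. Summing absolute values yields
\[
E(L(\Gamma(\mathbb{Z}_{2q}))) \;=\; (q-2) + (q-2)\cdot 1 \;=\; 2q-4.
\]

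There is essentially no obstacle here: the whole argument reduces to recognizing that $\Gamma(\mathbb{Z}_{2q})$ is a star, which is the only non-trivial observation, after which the line graph identification and the spectrum of $K_{q-1}$ are textbook facts. The sanity check with $q=3$ in Case~1 of Section~2 (where $L(\Gamma(\mathbb{Z}_6)) = K_2$ has energy $2 = 2\cdot 3 - 4$) confirms the formula in the base case.
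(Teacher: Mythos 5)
Your proposal is correct and follows essentially the same route as the paper: identify $L(\Gamma(\mathbb{Z}_{2q}))$ as the complete graph $K_{q-1}$ and read off its spectrum $\{(q-2)^{1},(-1)^{q-2}\}$ to get energy $2q-4$. Your write-up is in fact a bit more complete, since you explicitly verify that $\Gamma(\mathbb{Z}_{2q})$ is the star $K_{1,q-1}$ (a step the paper asserts without proof) before applying $L(K_{1,m})\cong K_m$.
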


\begin{proof} Let $p$ = $2$ and $q$ be an odd prime number. Then $L(\Gamma(\mathbb{Z}_{2q}))$ is a complete graph with $(q-1)$ vertices and every vertex is adjacent to every other vertex of $\mathbb{Z}_{2q}$. Therefore, \\
	\[
	M(L(\Gamma(\mathbb{Z}_{2q})))=
	\begin{bmatrix}
	0 & 1 & 1 \  \cdots & 1 & 1\\
	1 & 0 &  1 \  \cdots & 1 & 1\\
	\vdots & \vdots & &  & \vdots \\
	1 & 1 &  1 \ \cdots & 1 & 0
	\end{bmatrix}_{q-1 \times q-1}.
	\] \\
	
	Hence, the characteristic polynomial of the above matrix is\\
	
	$f(\lambda)$ = $\mid  M(L(\Gamma(\mathbb{Z}_{2q}))) - \lambda I_{q-1} \mid$ = $ \begin{vmatrix}
	0-\lambda & 1 & \cdots & 1 \\
	1 & 0-\lambda & \cdots & 1 \\
	\vdots & \vdots &  & \vdots \\
	1 & 1 & \cdots & 0-\lambda
	\end{vmatrix}$
	
	\vspace{.2in}
	\hspace{1.9in} = $(\lambda + 1)^{q-2}(\lambda - q + 2).$\\
	
	So, characteristic equation $f(\lambda)$ = 0, has the roots $\lambda  = -1$, $q-2$. Therefore, non-zero eigenvalues are $(q-2)$ times $-1$ and one time $(q-2)$. Thus, $\sum_i^{q-1}\mid \lambda_i\mid = 2q-4.$
\end{proof}

\begin{theorem} If $p$ = $3$ and $q > 3$ is any prime number, then energy of $L(\Gamma(\mathbb{Z}_{3q}))$ is $4q - p - 5$.
\end{theorem}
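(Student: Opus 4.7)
The plan is to first identify the structure of $\Gamma(\mathbb{Z}_{3q})$ explicitly. The nonzero zero-divisors of $\mathbb{Z}_{3q}$ split into two natural classes: the $q-1$ multiples of $3$, namely $\{3, 6, \ldots, 3(q-1)\}$, and the two multiples of $q$, namely $q$ and $2q$. Checking the condition $xy \equiv 0 \pmod{3q}$ on each class, I would observe that no two multiples of $3$ are adjacent (since $q \nmid 3k$ for $1 \le k \le q-1$), that $q$ and $2q$ are not adjacent (since $q > 3$ forces $3 \nmid 2q^2$), and that every multiple of $3$ is adjacent to both $q$ and $2q$. Thus $\Gamma(\mathbb{Z}_{3q})$ is the complete bipartite graph $K_{q-1,2}$, having $2(q-1)$ edges.

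Next I would build the adjacency matrix of $L(\Gamma(\mathbb{Z}_{3q}))$ by labeling each edge of $\Gamma(\mathbb{Z}_{3q})$ by its (multiple-of-$3$, multiple-of-$q$) endpoints, and ordering the $2(q-1)$ vertices of the line graph so that those incident to $q$ come first, followed by those incident to $2q$ in the same order. Two edges are adjacent in the line graph iff they share either their multiple-of-$3$ endpoint or their multiple-of-$q$ endpoint. This produces the block form
\[
M = \begin{bmatrix} A & I \\ I & A \end{bmatrix},
\]
where $A = J_{q-1} - I_{q-1}$ is the adjacency matrix of $K_{q-1}$ and $I$ is the identity of order $q-1$, replicating the pattern that already appears in Case 2 for $\mathbb{Z}_{15}$ (the $q=5$ instance).

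To compute the spectrum I would exploit the fact that $A$ commutes with $I$: for any eigenvector $v$ of $A$ with eigenvalue $\mu$, the vectors $(v,v)^{\top}$ and $(v,-v)^{\top}$ are eigenvectors of $M$ with eigenvalues $\mu+1$ and $\mu-1$ respectively. Since the spectrum of $J_{q-1} - I_{q-1}$ is $q-2$ with multiplicity $1$ together with $-1$ with multiplicity $q-2$, the spectrum of $M$ consists of $q-1$ (simple), $q-3$ (simple), $0$ (multiplicity $q-2$), and $-2$ (multiplicity $q-2$). Summing absolute values then gives $(q-1) + (q-3) + 2(q-2) = 4q - 8 = 4q - p - 5$, the claimed energy.

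The main obstacle, as I see it, is not the eigenvalue calculation but justifying the precise block form of $M$: one must carefully check that the chosen vertex ordering on edges of $K_{q-1,2}$ yields exactly the $\begin{bmatrix} A & I \\ I & A \end{bmatrix}$ pattern. Once that bookkeeping is done, the simultaneous diagonalization trick above makes the rest of the argument immediate.
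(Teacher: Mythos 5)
Your proposal is correct and follows essentially the same route as the paper: the paper likewise identifies $L(\Gamma(\mathbb{Z}_{3q}))$ as a $(q-1)$-regular graph on $2(q-1)$ vertices with adjacency matrix $\begin{bmatrix} J-I & I \\ I & J-I \end{bmatrix}$ and reads the energy off the spectrum $\{q-1,\ q-3,\ 0^{\,q-2},\ (-2)^{\,q-2}\}$. The only difference is that you justify the block form (via $\Gamma(\mathbb{Z}_{3q}) \cong K_{2,q-1}$) and the eigenvalues (via the $(v,\pm v)$ trick), steps the paper merely asserts.
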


\begin{proof}Let $p$ = $3$ and $q$ be any prime number greater than $3$. Then  $L(\Gamma(\mathbb{Z}_{3q}))$ is $(q-1)$-regular graph with $\mid \frac{1}{4} \sum degree(v_i) \mid = 2(q-1)$ vertices. Therefore,\\
	\[
	M(L(\Gamma(\mathbb{Z}_{3q})))=
	\begin{bmatrix}
	M_1 &    M_2 \\
	M_2 &    M_1
	\end{bmatrix},
	\]\\
	where $M_1$ is a symmetric matrix of $(q-1)\times (q-1)$ with all diagonal entries zero and non-diagonal entries are one and $M_2$ is the identity matrix of $(q-1)\times (q-1)$. Hence, the characteristic polynomial of the above matrix is \\
	
	$f(\lambda)$ = $\mid  M(L(\Gamma(\mathbb{Z}_{3q}))) - \lambda I_{q-1} \mid$ = $\lambda^{q-2} (2+ \lambda)^{q-2}(p - q + \lambda)(1- q + \lambda)$.\\
	
	The characteristic equation of the matrix is $f(\lambda)$ = 0, and the characteristic roots are $\lambda  = 0, -2, (q-p)$ and $(q-1)$. Therefore, non-zero eigenvalues are $(q-2)$ times $-2$, one time $(q-p)$ and one time $(q-1)$. Thus,\\
	
	\hspace{1in} $E(L(\Gamma(\mathbb{Z}_{3q})))$ = $2(q-2) + q - p + q - 1$\\
	
	\hspace{1.8in} = $2q - 4 + 2q - p -1$\\
	
	\hspace{1.8in} = $4q - p - 5$.
\end{proof}
\begin{theorem} If $p$ and $q > 2$ are distinct primes, then energy of $L(\Gamma(\mathbb{Z}_{pq}))$ is $4pq - 8p - 8q + 16$.
\end{theorem}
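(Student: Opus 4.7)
My plan is to mimic the block-matrix strategy of Theorem~3.2, but with a $(p-1)\times(p-1)$ block pattern replacing the $2\times 2$ one, and to extract the spectrum via a Kronecker-sum decomposition of the resulting adjacency matrix.

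First I would identify the underlying graph. The nonzero zero-divisors of $\mathbb{Z}_{pq}$ partition into the $q-1$ nonzero multiples of $p$ and the $p-1$ nonzero multiples of $q$, and two such elements annihilate each other modulo $pq$ exactly when they come from opposite classes. Hence $\Gamma(\mathbb{Z}_{pq})$ is the complete bipartite graph $K_{p-1,q-1}$, so the vertices of $L(\Gamma(\mathbb{Z}_{pq}))$ can be labelled by pairs $(i,j)$ with $1\le i\le p-1$, $1\le j\le q-1$, two such pairs being adjacent iff they agree in exactly one coordinate. The line graph therefore has $(p-1)(q-1)$ vertices, each of degree $p+q-4$.

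Next I would order the vertices lexicographically and write the adjacency matrix as a $(p-1)\times(p-1)$ array of $(q-1)\times(q-1)$ blocks,
\[
M=\begin{bmatrix} M_1 & M_2 & \cdots & M_2\\ M_2 & M_1 & \cdots & M_2\\ \vdots & & \ddots & \vdots\\ M_2 & M_2 & \cdots & M_1\end{bmatrix},\qquad M_1=J_{q-1}-I_{q-1},\quad M_2=I_{q-1},
\]
where $J_{q-1}$ denotes the all-ones matrix. Equivalently,
\[
M=(J_{p-1}-I_{p-1})\otimes I_{q-1}+I_{p-1}\otimes(J_{q-1}-I_{q-1}),
\]
which is the structural observation that makes the spectrum accessible. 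To diagonalize $M$, I would take eigenvectors of $J_{p-1}-I_{p-1}$ (the all-ones vector with eigenvalue $p-2$, and any basis of its orthogonal complement with eigenvalue $-1$, multiplicity $p-2$), and similarly for $J_{q-1}-I_{q-1}$, and form their tensor products. The resulting spectrum of $M$ consists of $p+q-4$ once, $p-3$ with multiplicity $q-2$, $q-3$ with multiplicity $p-2$, and $-2$ with multiplicity $(p-2)(q-2)$. Since $p,q\ge 3$, every nonzero eigenvalue already has unambiguous sign, so the energy equals
\[
(p+q-4)+(q-2)(p-3)+(p-2)(q-3)+2(p-2)(q-2),
\]
which expands to $4pq-8p-8q+16$.

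The main obstacle is staying within the paper's elementary determinantal idiom rather than invoking the Kronecker-sum spectrum as a black box. I would handle this either by iterating the $2\times 2$ block reduction of Theorem~3.2: on the "symmetric" subspace $\{(v,v,\dots,v)\}$ the matrix $M$ acts as $M_1+(p-2)M_2$, and on each of the $p-2$ orthogonal complements it acts as $M_1-M_2$, after which the spectra of $J_{q-1}+(p-3)I_{q-1}$ and $J_{q-1}-2I_{q-1}$ are immediate; or by exhibiting the four families of eigenvectors explicitly and verifying them by direct matrix multiplication. After the spectrum is isolated, the final arithmetic is a one-line expansion, and the sign check $p,q\ge 3$ disposes of any absolute-value issues.
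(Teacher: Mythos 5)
Your proposal is correct, and it arrives at exactly the spectrum the paper uses: $p+q-4$ once, $p-3$ with multiplicity $q-2$, $q-3$ with multiplicity $p-2$, and $-2$ with multiplicity $(p-2)(q-2)$, giving $(p+q-4)+(q-2)(p-3)+(p-2)(q-3)+2(p-2)(q-2)=4pq-8p-8q+16$. The difference lies in how that spectrum is reached. The paper's proof simply asserts the characteristic polynomial $(2+\lambda)^{(p-2)(q-2)}(p-3-\lambda)^{q-2}(q-3-\lambda)^{p-2}(p+q-4-\lambda)$ with no derivation, and its closing arithmetic even contains two slips (it writes $2(p-1)(q-2)$ where $2(p-2)(q-2)$ is meant, and ends with $-16$ instead of $+16$, although the theorem statement carries the correct value). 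You instead justify the spectral data: you identify $\Gamma(\mathbb{Z}_{pq})$ with $K_{p-1,q-1}$, recognize its line graph as the Cartesian product of $K_{p-1}$ and $K_{q-1}$, and write the adjacency matrix as $(J_{p-1}-I_{p-1})\otimes I_{q-1}+I_{p-1}\otimes(J_{q-1}-I_{q-1})$, whose commuting summands are simultaneously diagonalized by tensor products of eigenvectors of $J-I$; your fallback block reduction (action $J_{q-1}+(p-3)I_{q-1}$ on the symmetric block subspace and $J_{q-1}-2I_{q-1}$ on its complement, each complement direction contributing multiplicity $p-2$) is also correct and stays close to the $2\times 2$ block idiom of the paper's Theorem 3.2. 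What your route buys is a complete, verifiable derivation of the eigenvalues and their multiplicities, which is precisely the step the paper leaves unproved; your sign discussion ($p,q\ge 3$, so $p-3,\,q-3\ge 0$) correctly handles the absolute values, including the degenerate eigenvalue $0$ that occurs when $p=3$.
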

\begin{proof} Let $p$ and $q$ be distinct prime numbers greater than $2$. Then the line graph of $\Gamma(\mathbb{Z}_{pq})$ is a $(p + q - 4)$-regular graph with $(p - 1)(q - 1)$ vertices. Here, $M$ is a matrix of order $(p -1)(q - 1)$ and the characteristic polynomial of $M$ is \\
	
	\hspace{.6in} $f(\lambda)$ = $\mid  M(L(\Gamma(\mathbb{Z}_{pq}))) - \lambda I_{(p-1)(q-1)} \mid$\\
	
	\hspace{.8in} = $(2+ \lambda)^{(p-2)(q-2)}(p - 3 - \lambda)^{q-2} (q - 3 - \lambda)^{p-2}( (q + p - 4 - \lambda)$.\\
	
	The characteristic equation of the matrix is $f(\lambda)$ = 0, and roots are $\lambda  = -2, (p - 3), (q - 3)$ and $(q + p - 4)$. Therefore, non-zero eigenvalues are $(p - 2)(q - 2)$ times $-2$, $(q - 2)$ times $(p - 3)$, $(p - 2)$ times $(q - 3)$ and one time $(q + p - 4)$. Thus,\\
	
	\hspace{.2in} $E(L(\Gamma(\mathbb{Z}_{pq})))$ = $2(p-1)(q-2) + (q-2)(p-3) + (p-2)(q-3) + p + q - 4$\\
	
	\hspace{.6in} = $2pq - 4p - 4q + 8 + pq - 3q - 2p + 6 + pq - 3p - 2q + 6 + p + q - 4$\\
	
	\hspace{.6in} = $4pq - 8p - 8q - 16$.
\end{proof}

\begin{theorem} Let $n$ = $p^2$, where $p\geq5$ be any prime number. Then $E(L(\Gamma(\mathbb{Z}_{p^2})))$ = $2p^2 - 10p + 8$.
\end{theorem}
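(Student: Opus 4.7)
The plan is to reduce the computation to the well-known spectrum of the triangular graph. First, I would observe that the non-zero zero-divisors of $\mathbb{Z}_{p^{2}}$ are precisely the $p-1$ nonzero multiples of $p$, and any two distinct such elements multiply to $0$ modulo $p^{2}$. Hence $\Gamma(\mathbb{Z}_{p^{2}})\cong K_{p-1}$ and $L(\Gamma(\mathbb{Z}_{p^{2}}))\cong L(K_{p-1})$, the triangular graph $T(p-1)$ on $\binom{p-1}{2}$ vertices. Since $K_{p-1}$ is $(p-2)$-regular, this line graph is $(2p-6)$-regular.

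Next I would extract the spectrum from the standard incidence-matrix identity. Let $B$ be the (unsigned) vertex-edge incidence matrix of $K_{p-1}$. Then
\[
B^{T}B = A(L(K_{p-1})) + 2I_{\binom{p-1}{2}} \quad\text{and}\quad BB^{T} = A(K_{p-1}) + (p-2)I_{p-1}.
\]
The spectrum of $A(K_{p-1})$ consists of $p-2$ (simple) and $-1$ (with multiplicity $p-2$), so $BB^{T}$ has eigenvalues $2(p-2)$ (once) and $p-3$ ($p-2$ times). Because $B^{T}B$ and $BB^{T}$ share their nonzero spectrum and $B^{T}B$ is of size $\binom{p-1}{2}$, the matrix $B^{T}B$ also has the eigenvalue $0$ with multiplicity $\binom{p-1}{2}-(p-1)=(p-1)(p-4)/2$. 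Subtracting $2$, the spectrum of $A(L(\Gamma(\mathbb{Z}_{p^{2}})))$ is
\[
2p-6\ (\text{multiplicity }1),\quad p-5\ (\text{multiplicity }p-2),\quad -2\ \Bigl(\text{multiplicity }\tfrac{(p-1)(p-4)}{2}\Bigr).
\]

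Finally, summing absolute values (each of $2p-6$ and $p-5$ is nonnegative for $p\geq 5$), I would obtain
\[
E(L(\Gamma(\mathbb{Z}_{p^{2}}))) = (2p-6) + (p-2)(p-5) + (p-1)(p-4) = 2p^{2}-10p+8,
\]
after a direct expansion; the boundary case $p=5$ is absorbed automatically since the middle eigenvalue then vanishes. The only real content here is the spectrum of $T(p-1)$, which the incidence-matrix argument delivers cleanly; an alternative would be to cite the fact that $T(n)$ is strongly regular with parameters $\bigl(\binom{n}{2},\,2(n-2),\,n-2,\,4\bigr)$ and read off the eigenvalues from those parameters. I expect the presentation of this spectral step to be the only nontrivial hurdle, as the remainder is mechanical algebra.
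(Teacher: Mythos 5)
Your proposal is correct, and it is in fact more complete than the paper's own argument. The paper simply writes down the adjacency matrix of $L(\Gamma(\mathbb{Z}_{p^2}))$ as a $(2(p-3))$-regular symmetric matrix of order $\frac{(p-1)(p-2)}{2}$ and then \emph{asserts} the characteristic polynomial $(\lambda-2p+6)(\lambda-p+5)^{p-2}(\lambda+2)^{\frac{(p-1)(p-4)}{2}}$ without any derivation, after which the energy computation is the same mechanical sum you perform. You instead justify that spectrum: you identify $\Gamma(\mathbb{Z}_{p^2})$ with $K_{p-1}$ (since the nonzero multiples of $p$ pairwise multiply to $0$ mod $p^2$), so the line graph is the triangular graph $T(p-1)$, and you extract its eigenvalues from the incidence-matrix identities $B^{T}B=A(L(K_{p-1}))+2I$ and $BB^{T}=A(K_{p-1})+(p-2)I$ together with the shared nonzero spectrum of $B^{T}B$ and $BB^{T}$; the multiplicity count $\binom{p-1}{2}-(p-1)=\frac{(p-1)(p-4)}{2}$ for the eigenvalue $-2$ is right, and the hypothesis $p\geq 5$ keeps $p-5\geq 0$ so the absolute values collapse as you say. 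What your route buys is a self-contained proof of the spectral claim the paper leaves unsupported (citing the strongly regular parameters of $T(n)$ would serve equally well); what the paper's presentation buys is only brevity. Your final arithmetic $(2p-6)+(p-2)(p-5)+(p-1)(p-4)=2p^2-10p+8$ checks out.
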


\begin{proof} Let $n = p^2,$ $p\geq5$ be any prime number. Then $L(\Gamma(\mathbb{Z}_{p^2}))$ is $2(p-3)$-regular graph with $\dfrac{(p-1)(p-2)}{2}$ vertices. Therefore,\\
\[
M(L(\Gamma(\mathbb{Z}_{p^2})))=
\begin{bmatrix}
0 & 1 & 1 & \cdots & 0\\
1 & 0 & 1 & \cdots & 1\\
\cdots & \cdots & \cdots & \cdots & \cdots\\
\cdots & \cdots & \cdots & \cdots & 0\\
\end{bmatrix},
\]\\
where $M$ is symmetric matrix of $\dfrac{(p-1)(p-2)}{2}\times \dfrac{(p-1)(p-2)}{2}$ with all diagonal entries are zero and non-diagonal entries are one if the vertices are adjacent or zero if they are non-adjacent. Therefore, the characteristic polynomial of above matrix is \\

\hspace{1.4in}  $f(\lambda)$ = $\lvert  M(L(\Gamma(\mathbb{Z}_{p^2}))) - \lambda I_{\frac{(p-1)(p-2)}{2}} \lvert$\\

\hspace{1.7in} = $(\lambda-2p+6) (\lambda-p+5)^{(p-2)}(\lambda+2)^\frac{(p-1)(p-4)}{2}$.\\

The characteristic equation of the matrix is $f(\lambda)$ = 0,  and eigenvalues are one time $2(p-3)$, $(p-2)$ times $(p-5)$ and $\frac{(p-1)(p-4)}{2}$ times $(-2)$ respectively. Hence, \\

\hspace{.8in} $E(L(\Gamma(\mathbb{Z}_{p^2})))$ = $2(p-3) + (p-2)(p-5) + 2\dfrac{(p-1)(p-4)}{2} $ \\

\hspace{1.6in} = $2p -6 + p^2 - 5p - 2p + 10 + p^2 - p - 4p + 4$\\

\hspace{1.6in} = $2p^2 - 10p + 8$.
\end{proof}

\begin{theorem} If $ p = 2$ and $q$ is an odd prime number, then $W(L(\Gamma(\mathbb{Z} _{2q})))$ is $\dfrac{(q-1)(q-2)}{2}$.
\end{theorem}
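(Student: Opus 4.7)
The plan is to leverage the structural fact already extracted in the proof of Theorem 3.1, namely that when $p=2$ and $q$ is an odd prime, $L(\Gamma(\mathbb{Z}_{2q}))$ is a complete graph on $q-1$ vertices. Once this is in hand, the computation of the Wiener index reduces to a counting problem on $K_{q-1}$, and the statement becomes essentially immediate.

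Concretely, I would proceed in three short steps. First, invoke Theorem 3.1 to assert $L(\Gamma(\mathbb{Z}_{2q})) \cong K_{q-1}$; one can see this directly by noting that $Z(\mathbb{Z}_{2q})\setminus\{0\}$ consists of the element $q$ together with the $q-1$ nonzero multiples of $2$, and every edge of $\Gamma(\mathbb{Z}_{2q})$ is incident to $q$, so the edges form a star with $q-1$ rays and their line graph is $K_{q-1}$. Second, observe that in $K_{q-1}$ any two distinct vertices are adjacent, so $d(x,y)=1$ for every pair $x\ne y$ in $V(L(\Gamma(\mathbb{Z}_{2q})))$. Third, plug this into the definition of the Wiener index:
\[
W(L(\Gamma(\mathbb{Z}_{2q}))) \;=\; \frac{1}{2}\sum_{x,y\in V} d(x,y) \;=\; \binom{q-1}{2} \;=\; \frac{(q-1)(q-2)}{2}.
\]

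There is essentially no obstacle here beyond citing the earlier structural result; the only step requiring any care is verifying once more (or referring back to Theorem 3.1) that the star structure of $\Gamma(\mathbb{Z}_{2q})$ really does force its line graph to be the complete graph $K_{q-1}$, since the whole computation hinges on every pairwise distance being exactly one. Given that, the proof is a one-line application of $W(K_n)=\binom{n}{2}$ with $n=q-1$.
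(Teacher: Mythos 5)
Your proposal is correct and follows essentially the same route as the paper: both identify $L(\Gamma(\mathbb{Z}_{2q}))$ as the complete graph $K_{q-1}$ (so that every pairwise distance is $1$) and then compute $W = \binom{q-1}{2} = \frac{(q-1)(q-2)}{2}$. Your added remark justifying the completeness via the star structure of $\Gamma(\mathbb{Z}_{2q})$ is a small but welcome extra detail that the paper simply asserts.
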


\begin{proof} Let $p$ = $2$ and $q$ be an odd prime number. Then $L(\Gamma(\mathbb{Z}_{2q}))$ is a complete graph with $(q-1)$ vertices and every vertex is adjacent to every other vertex of line graph of zero divisor graph of $\mathbb{Z}_{2q}$. Hence, for every vertex of line graph of $\Gamma(\mathbb{Z}_{2q})$ has distance one from other vertices of graph, i.e. $d(x,y)=1$, $x$, $y\in L(\Gamma(\mathbb{Z}_{2q}))$. Therefore,\\
\[W(L(\Gamma(\mathbb{Z}_{2q}))) = \sum d(x,y) = \sum\limits_{x \neq y}1 \]\\
\vspace{0.1in}
\hspace{2.1in} $ = \begin{pmatrix}
q-1 \\
2
\end{pmatrix}$ = $\dfrac{(q-1)(q-2)}{2}$.
\end{proof}

\begin{theorem} If $ p = 3$ and $q>3$ is any prime number, then $W(L(\Gamma(\mathbb{Z}_{3q})))$ is $(q-1)(3q-p-2)$.
\end{theorem}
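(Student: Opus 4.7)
The strategy is to exploit the structural description of $L(\Gamma(\mathbb{Z}_{3q}))$ that is already implicit in Theorem 3.2. Since $\Gamma(\mathbb{Z}_{3q})$ is the complete bipartite graph $K_{2,q-1}$ (the two multiples of $q$ on one side, the $q-1$ multiples of $3$ on the other), its edges, which are the vertices of $L(\Gamma(\mathbb{Z}_{3q}))$, split naturally into two groups of $q-1$ according to which multiple of $q$ they contain. Within each group the edges pairwise share that common multiple of $q$, so each group induces a $K_{q-1}$; and two edges in different groups share a vertex iff they use the same multiple of $3$, giving a perfect matching between the two cliques. This is precisely the graph encoded by the block adjacency matrix with $M_1$ all-ones-off-diagonal and $M_2 = I_{q-1}$ in Theorem 3.2.

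From this description all pairwise distances can be read off. Two vertices lying in the same copy of $K_{q-1}$ are at distance $1$; two matched vertices in opposite copies are at distance $1$; and two unmatched vertices in opposite copies are at distance exactly $2$, since they are non-adjacent (the underlying edges in $K_{2,q-1}$ share no endpoint) but can be joined by a length-$2$ path that first traverses the matching edge and then an edge within the target clique.

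It then remains to count. There are $2\binom{q-1}{2}=(q-1)(q-2)$ intra-clique pairs, $q-1$ matched cross pairs, and $(q-1)^2-(q-1)=(q-1)(q-2)$ unmatched cross pairs, totalling $\binom{2(q-1)}{2}=(q-1)(2q-3)$ pairs as a sanity check. Weighting by the corresponding distances and summing gives
\[
W\bigl(L(\Gamma(\mathbb{Z}_{3q}))\bigr) = (q-1)(q-2) + (q-1) + 2(q-1)(q-2) = (q-1)(3q-5),
\]
which equals $(q-1)(3q-p-2)$ upon substituting $p=3$. The only mildly subtle step is the distance-$2$ claim for unmatched cross pairs, and that is settled by the explicit length-$2$ path above together with non-adjacency read off from the line-graph adjacency rule; everything else is straightforward counting.
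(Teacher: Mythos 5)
Your proposal is correct and in essence follows the paper's own route: both arguments reduce the computation to the fact that $L(\Gamma(\mathbb{Z}_{3q}))$ is $(q-1)$-regular on $2(q-1)$ vertices with diameter $2$, and then weight the pairs at distance $1$ and distance $2$ accordingly, arriving at $(q-1)(3q-5)=(q-1)(3q-p-2)$. Your write-up is actually tighter at the one point the paper glosses over: by identifying the graph explicitly as two copies of $K_{q-1}$ joined by a perfect matching (coming from $\Gamma(\mathbb{Z}_{3q})=K_{2,q-1}$) you prove that every non-adjacent pair is at distance exactly $2$, which the paper merely asserts, and your direct count of the three pair types (intra-clique, matched cross, unmatched cross) is cleaner than the paper's per-vertex sums over the sets $T$ and $S$, whose intermediate expressions are muddled even though the final total is right.
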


\begin{proof} Let $p$ = $3$ and $q>3$ be any prime number greater than $3$. Then $L(\Gamma(\mathbb{Z}_{3q}))$ is $(q-1)$-regular graph with $2(q-1)$ vertices and every vertex is adjacent with $(q-1)$ vertices of $L(\Gamma(\mathbb{Z}_{3q}))$. Therefore, distance of a vertex from other vertex is one if they are adjacent and two if they are not adjacent. So, we can divide the set of vertices of $L(\Gamma(\mathbb{Z}_{3q}))$ into two sets, say $T$ and $S$, where $T$ and $S$ contain those vertices $u$ and $v$ such that $d(u, v) = 1$ and $d(u, v) = 2$ respectively. For every $u\in T$,\\
\[W(u) = \sum\limits_{u\in T}d(u,v) +  \sum\limits_{u\in S, u\neq v}d(u,v)\]
\vspace{0.1in}
\[= \sum\limits_{u\in T}1 +  \sum\limits_{u\in S, u\neq v}d(u,v)\]\\
\vspace{0.1in}
\hspace{1.6in} = $2(q-1)(q-1) + (q-1)(q-p).$\\
For every $u\in S$,
\vspace{0.1in}
\[W(u) = \sum\limits_{u\in S}d(u,v) +  \sum\limits_{u\in T, u\neq v}d(u,v)\]
\vspace{0.1in}
\[= \sum\limits_{u\in S}1 +  \sum\limits_{u\in T, u\neq v}d(u,v)\]\\
\vspace{0.1in}
\hspace{1.6in} = $2(q-1)(q-1) + (q-1)(q-p).$\\
Therefore,\\
\[W(L(\Gamma(\mathbb{Z}_{3q}))) = \frac{1}{2} \left[ {\sum\limits_{u\in L} W(u) + \sum\limits_{u\in S} W(u)} \right] \]
\vspace{0.1in}
\hspace{.5in} = $\dfrac{1}{2}[2(q-1)(q-1) + (q- 1)(q-p) + 2(q-1)(q-1) + (q-1)(q-p)]$\\
\vspace{0.1in}
\hspace{.5in} = $(q-1)[2(q-1) + (q-p)]$\\
\vspace{0.1in}
\hspace{.5in} = $(q-1)(3q-p-2)$.
\end{proof}

\begin{theorem} If $p$, $q$ are prime numbers such that $2 < p < q$, then $W(L(\Gamma(\mathbb{Z}_{pq})))$ is $\dfrac{1}{2}(p-1)(q-1)(2pq - 3p - 3q + 4)$.
\end{theorem}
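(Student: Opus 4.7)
The plan is to identify $\Gamma(\mathbb{Z}_{pq})$ explicitly, pass to its line graph, and then count pairs of vertices at each possible distance. Since $2<p<q$ are distinct primes, the nonzero zero-divisors of $\mathbb{Z}_{pq}$ split into the $q-1$ nonzero multiples of $p$ and the $p-1$ nonzero multiples of $q$, and $xy\equiv 0\pmod{pq}$ holds for two such distinct zero-divisors iff one is a multiple of $p$ and the other a multiple of $q$. Hence $\Gamma(\mathbb{Z}_{pq})$ is the complete bipartite graph $K_{p-1,q-1}$, and the vertices of $L(\Gamma(\mathbb{Z}_{pq}))$ are naturally indexed by pairs $(i,j)\in\{1,\dots,p-1\}\times\{1,\dots,q-1\}$, two such pairs being adjacent iff they agree in exactly one coordinate. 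This is the rook graph on a $(p-1)\times(q-1)$ board.

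From this description I would extract what the previous theorem already used: the graph is $(p+q-4)$-regular on $(p-1)(q-1)$ vertices. I would then verify that the diameter is $2$: for non-adjacent vertices $(i,j)$ and $(i',j')$ with $i\neq i'$ and $j\neq j'$, the vertex $(i,j')$ is adjacent to both, giving a path of length $2$. Hence every pair of distinct vertices lies at distance $1$ or $2$, and only these two contributions need to be summed.

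Next, following the splitting argument used in the $p=3$ case, for a fixed vertex $u$ there are exactly $p+q-4$ vertices at distance $1$ and
\[
(p-1)(q-1) - 1 - (p+q-4) = (p-2)(q-2)
\]
vertices at distance $2$. Summing $W(u) = (p+q-4) + 2(p-2)(q-2)$ over all $(p-1)(q-1)$ vertices and dividing by $2$ (each unordered pair counted twice) gives
\[
W(L(\Gamma(\mathbb{Z}_{pq}))) = \frac{(p-1)(q-1)}{2}\bigl[(p+q-4) + 2(p-2)(q-2)\bigr],
\]
and the bracket simplifies to $2pq - 3p - 3q + 4$, yielding the stated formula.

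The genuine content is the structural identification with the rook graph; after that, the count is bookkeeping. The one step requiring care is the diameter-$2$ claim: without it, pairs at distance $\geq 3$ could in principle contribute more than the formula accounts for, but the rook-graph picture makes this immediate, since $(i,j')$ is always a legitimate intermediate vertex.
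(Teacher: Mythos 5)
Your proposal is correct and follows essentially the same route as the paper: use that $L(\Gamma(\mathbb{Z}_{pq}))$ is $(p+q-4)$-regular on $(p-1)(q-1)$ vertices, split the remaining vertices from any fixed vertex into those at distance $1$ and distance $2$, and sum, giving $\frac{(p-1)(q-1)}{2}\bigl[(p+q-4)+2(p-2)(q-2)\bigr]=\frac{(p-1)(q-1)}{2}(2pq-3p-3q+4)$. The only difference is that you explicitly justify the structural facts the paper merely asserts — the identification $\Gamma(\mathbb{Z}_{pq})=K_{p-1,q-1}$, the rook-graph description of its line graph, and the diameter-$2$ claim via the intermediate vertex $(i,j')$ — which makes your write-up tighter than the paper's somewhat muddled $T$/$S$ bookkeeping.
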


\begin{proof} Let $p$ and $q$ be prime numbers and $2 < p < q$. Then  $L(\Gamma(\mathbb{Z}_{pq}))$ is $(p + q - 4)$-regular graph with $(p-1)(q-1)$ vertices. So, we can divide all the vertices of $L(\Gamma(\mathbb{Z}_{pq}))$ into two sets, say $T$ and $S$, where $T$ and $S$ contain those vertices $u$ and $v$ such that $d(u, v) = 1$ and $d(u, v) = 2$ respectively. For every $u\in T$,
\[W(u) = \sum\limits_{u\in T}d(u,v) +  \sum\limits_{u\in S, u\neq v}d(u,v)\]
\vspace{0.1in}
\[= \sum\limits_{u\in T}1 +  \sum\limits_{u\in S, u\neq v}2\]\\
\vspace{0.1in}
\hspace{.3in} = $\dfrac{(p-1)(q-1)}{2}(p + q - 4) + 2\dfrac{(p-1)(q-1)}{2}[(p-1)(q-1) - (p+q-3)].$\\

\hspace{.11in} = $\dfrac{(p-1)(q-1)}{2}[(p + q - 4) + 2\{(p-1)(q-1) - (p+q-3)\}].$\\

For every $u\in S$,
\vspace{0.1in}
\[W(u) = \sum\limits_{u\in S}d(u,v) +  \sum\limits_{u\in T, u\neq v}d(u,v)\]
\vspace{0.1in}
\[= \sum\limits_{u\in S}1 +  \sum\limits_{u\in T, u\neq v}2\]\\
\vspace{0.1in}
\hspace{.3in} = $\dfrac{(p-1)(q-1)}{2}(p + q - 4) + 2\dfrac{1(p-1)(q-1)}{2}[(p-1)(q-1) - (p+q-3)]$\\

\hspace{.11in} = $\dfrac{(p-1)(q-1)}{2}[(p + q - 4) + 2\{(p-1)(q-1) - (p+q-3)\}].$\\

Hence, \[W(L(\Gamma(\mathbb{Z}_{pq}))) = \frac{1}{2} \left[ {\sum\limits_{u\in L} W(u) + \sum\limits_{u\in S} W(u)} \right] \]
\vspace{0.1in}
\hspace{0in} = $\dfrac{1}{2}[\dfrac{(p-1)(q-1)}{2}[(p + q - 4) + 2\{(p-1)(q-1) - (p+q-3)\}]+$\\
\vspace{0in}
\hspace{.4in} $\dfrac{(p-1)(q-1)}{2}[(p + q - 4) + 2\{(p-1)(q-1) - (p + q - 3)\}]]$\\

\vspace{0.1in}
\hspace{.5in}= $\dfrac{(p-1)(q-1)}{2}[p + q - 4 + 2\{(p-1)(q-1) - (p+q-3)\}]$\\

\vspace{0.1in}
\hspace{.5in}= $\dfrac{(p-1)(q-1)}{2}[p + q - 4 + 2(pq-p-q+1-p-q+3)]$\\

\vspace{0.1in}
\hspace{.5in}= $\dfrac{(p-1)(q-1)}{2}(2pq -3p -3q +4)$.
\end{proof}

\begin{theorem}  If $ n = p^2$ and $p\geq5$ is any prime number, then $W(L(\Gamma(\mathbb{Z}_{p^2}) ))$ is $\dfrac{1}{4}(p-1)(p-2)^2 (p-3)$.
\end{theorem}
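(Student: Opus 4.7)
The plan is to exploit the very explicit structure of $L(\Gamma(\mathbb{Z}_{p^2}))$. Since the nonzero zero-divisors of $\mathbb{Z}_{p^2}$ are exactly the $p-1$ multiples $p, 2p, \ldots, (p-1)p$, and any two of them have product divisible by $p^2$, the graph $\Gamma(\mathbb{Z}_{p^2})$ is the complete graph $K_{p-1}$. Consequently $L(\Gamma(\mathbb{Z}_{p^2}))$ is the triangular graph $T(p-1) = L(K_{p-1})$, which is $2(p-3)$-regular on $N := \binom{p-1}{2} = \frac{(p-1)(p-2)}{2}$ vertices, as already used in the energy theorem above.

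The key distance observation is that in $L(K_{p-1})$ every pair of distinct vertices is at distance $1$ or $2$. Indeed, two vertices correspond to edges $e, e'$ of $K_{p-1}$; if they share an endpoint in $K_{p-1}$, they are adjacent in the line graph, hence at distance $1$. Otherwise $e = \{a,b\}$ and $e' = \{c,d\}$ are disjoint, and (because $p \geq 5$ guarantees $p-1 \geq 4$) the edge $\{a,c\}$ of $K_{p-1}$ exists and is adjacent to both $e$ and $e'$, giving distance exactly $2$. So I would split the neighborhood of each vertex $v$ into its $2(p-3)$ neighbors (distance $1$) and the remaining
\[
N - 1 - 2(p-3) \;=\; \frac{(p-1)(p-2)}{2} - 1 - 2(p-3) \;=\; \frac{(p-3)(p-4)}{2}
\]
vertices (distance $2$).

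From this, the vertex-transfer sum
\[
W(v) \;=\; \sum_{u \neq v} d(v,u) \;=\; 1 \cdot 2(p-3) \;+\; 2 \cdot \frac{(p-3)(p-4)}{2} \;=\; (p-3)(p-2)
\]
is independent of $v$, so the Wiener index is
\[
W\bigl(L(\Gamma(\mathbb{Z}_{p^2}))\bigr) \;=\; \frac{1}{2} \sum_{v} W(v) \;=\; \frac{1}{2}\cdot \frac{(p-1)(p-2)}{2}\cdot (p-2)(p-3) \;=\; \frac{(p-1)(p-2)^{2}(p-3)}{4},
\]
as claimed.

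The main obstacle is really the distance-$2$ step: one must confirm that no pair of vertices lies at distance $\geq 3$, and this is precisely where the hypothesis $p \geq 5$ enters (for $p = 3$ the line graph is empty and for $p = 2$ it has no vertices, so the complete-graph-line-graph structure only becomes diameter $2$ from $p = 5$ onward). The rest of the argument is a routine double count of vertex pairs by whether their corresponding $K_{p-1}$-edges meet, and a simple polynomial simplification.
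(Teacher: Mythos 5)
Your proof is correct and follows the same basic strategy the paper sketches: partition the vertex pairs of the $2(p-3)$-regular graph on $\frac{(p-1)(p-2)}{2}$ vertices into those at distance $1$ and those at distance $2$, and sum. In fact you supply exactly what the paper omits — it merely says ``use the same procedure as Theorem 3.7'' without carrying out the count — whereas you identify the graph explicitly as $L(K_{p-1})$, justify that the diameter is $2$ (where $p\geq 5$ is genuinely needed), compute the distance-$2$ count $\frac{(p-3)(p-4)}{2}$, and verify that the per-vertex sum $(p-2)(p-3)$ yields the stated formula.
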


\begin{proof} Let $n$ = $p^2$, $p\geq5$ be any prime number. Then $L(\Gamma(\mathbb{Z}_{p^2}))$ is $2(p-3)$-regular graph with $\dfrac{(p-1)(p-2)}{2}$ vertices and every vertex is adjacent with $2(p-3)$ vertices of the line graph of the zero divisor graph of $\mathbb{Z}_{p^2}$. Hence, distance of a vertex from other vertex is one if they are adjacent and  it is two if they are not adjacent. Therefore, we can divide the set of vertices  of $L(\Gamma(\mathbb{Z}_{p^2}))$ into two sets, say $T$ and $S$, where $T$ and $S$ contain those vertices $u$ and $v$ such that $d(u, v) = 1$ and $d(u, v) = 2$ respectively. We use same procedure as above Theorem [3.7] to find the distance between any two vertices and finally we get the Wiener index of $L(\Gamma(\mathbb{Z}_{p^2}))$ is $\dfrac{1}{4}(p-1)(p-2)^2 (p-3)$.
\end{proof}

\begin{theorem} Let $\mathbb{Z}_n$ be the finite commutative ring and $A$ the adjacency matrix corresponding to the line graph of its zero divisor graph. Then trace of $A$ is always zero.
\end{theorem}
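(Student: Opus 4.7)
The plan is to reduce the statement to the general fact that the adjacency matrix of any simple (loopless) graph has all zero diagonal entries, and then verify that $L(\Gamma(\mathbb{Z}_n))$ is such a graph. Recall that if $A=[a_{ij}]$ is the adjacency matrix of a graph $H$ on vertices $v_1,\dots,v_N$, then $a_{ii}=1$ would mean that $v_i$ is adjacent to itself, i.e.\ that there is a loop at $v_i$. Hence $\operatorname{tr}(A)=\sum_i a_{ii}$ equals the number of loops in $H$, and it suffices to show that $L(\Gamma(\mathbb{Z}_n))$ is loopless.

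First I would recall the definition of $\Gamma(\mathbb{Z}_n)$ given in the introduction: its vertex set is $Z(\mathbb{Z}_n)$, and two \emph{distinct} vertices $x,y$ are joined by an edge precisely when $xy=0$. The word ``distinct'' here rules out loops from the start, so $\Gamma(\mathbb{Z}_n)$ is by construction a simple graph. Next I would pass to the line graph: vertices of $L(\Gamma(\mathbb{Z}_n))$ correspond to edges of $\Gamma(\mathbb{Z}_n)$, and two such vertices are adjacent iff the corresponding edges share an endpoint in $\Gamma(\mathbb{Z}_n)$. The relation ``shares an endpoint'' is defined only between two distinct edges; a single edge is never declared adjacent to itself. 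Therefore $L(\Gamma(\mathbb{Z}_n))$ is also loopless.

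Combining these two observations, every diagonal entry $a_{ii}$ of the adjacency matrix $A$ of $L(\Gamma(\mathbb{Z}_n))$ is zero, and so
\[
\operatorname{tr}(A)=\sum_{i=1}^{N}a_{ii}=0,
\]
where $N$ is the number of edges of $\Gamma(\mathbb{Z}_n)$. This completes the argument. There is no real obstacle here: the statement is essentially a bookkeeping consequence of the definitions, and the three worked cases $n=6,\,15,\,25$ in Section~2 already illustrate the phenomenon. The only mild care needed is to spell out why ``shares a common endpoint'' in the definition of a line graph is an irreflexive relation, which one can do either by explicitly restricting to distinct edges or by noting that the construction of $L(H)$ for any simple graph $H$ yields another simple graph.
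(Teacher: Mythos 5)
Your proposal is correct and follows essentially the same route as the paper: trace equals the sum of diagonal entries, which vanish because $L(\Gamma(\mathbb{Z}_n))$ has no self-loops. If anything, your justification is tighter than the paper's, since you derive looplessness directly from the definitions (adjacency in $\Gamma(\mathbb{Z}_n)$ is declared only between \emph{distinct} zero-divisors, and line-graph adjacency only between distinct edges sharing an endpoint), whereas the paper merely appeals to the worked examples for $n=6,15,25$.
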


\begin{proof} Let $[a_{i, j} ]_{n\times n}$ be adjacency matrix corresponding to the line graph of zero divisor graph of the finite commutative ring $\mathbb{Z}_n$. As we know, trace \[ A= \sum_{i = j} a_{i, j}.\] In adjacency matrix, diagonal entry will be non-zero due to self loop., i.e., $v_{i}.v_{j} = 0,$ so $a_{i,j}$, $i =1, 2, 3, \cdots $ will be non-zero. Since, above discussed examples shows that no vertex of the line graph of zero divisor graph over ring $\mathbb{Z}_n$ has self loop. Therefore, all the diagonal entries are zero. Hence, $tr(A) = 0$.
\end{proof}

\begin{theorem} In $L(\Gamma(\mathbb{Z}_n))$, if $n= pq$ such that $ p < q$, then neighbourhood number $n(L(\Gamma(\mathbb{Z}_n))) = p-1$.
\end{theorem}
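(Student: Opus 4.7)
The plan is to identify $L(\Gamma(\mathbb{Z}_{pq}))$ with the rook's graph on a $(p-1)\times(q-1)$ grid, and then to pin down the value $p-1$ via an explicit construction combined with a short pigeonhole argument.

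First I would record the underlying structure. For distinct primes $p<q$, the zero-divisor graph $\Gamma(\mathbb{Z}_{pq})$ is the complete bipartite graph $K_{p-1,q-1}$ with parts $\{jq : 1\le j\le p-1\}$ and $\{ip : 1\le i\le q-1\}$. Its line graph therefore has $(p-1)(q-1)$ vertices, which I label by pairs $(j,i)\in\{1,\dots,p-1\}\times\{1,\dots,q-1\}$ corresponding to the edges of $K_{p-1,q-1}$; two such vertices are adjacent precisely when the corresponding edges share an endpoint, i.e.\ when $j=j'$ or $i=i'$. In particular, the closed neighborhood $N[(j,i)]$ equals the union of the row $\{(j,i') : 1\le i'\le q-1\}$ and the column $\{(j',i) : 1\le j'\le p-1\}$.

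For the upper bound I would exhibit the diagonal set $A=\{(j,j) : 1\le j\le p-1\}$, which is well-defined because $p-1\le q-1$. Every vertex $(j,i)$ shares its row with $(j,j)\in A$, so $(j,i)\in N[(j,j)]$; hence the closed neighborhoods of the elements of $A$ cover the whole graph, exhibiting a neighborhood set of size $p-1$.

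For the matching lower bound, I would suppose $A$ is a neighborhood set with $|A|<p-1$ and derive a contradiction. Since there are $p-1$ rows, pigeonhole yields a row $r$ containing no element of $A$; then a vertex $(r,i)$ can lie in $N[(x,y)]$ with $(x,y)\in A$ only if $x=r$ (excluded) or $y=i$, forcing every one of the $q-1$ columns to meet $A$. This gives $|A|\ge q-1>p-1$, a contradiction, and combined with the upper bound yields $n(L(\Gamma(\mathbb{Z}_{pq})))=p-1$.

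The most delicate step, as I see it, is the lower bound: one has to turn the hypothesis $|A|<p-1$ into a missing \emph{row} and then transfer the covering obligation onto all $q-1$ columns. Once the rook-graph description is in place, the upper bound and the boundary case $p=2$ (where the columns degenerate to singletons, the rook's graph collapses to $K_{q-1}$, and the formula still correctly returns $p-1=1$) are essentially immediate.
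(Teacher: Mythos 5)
Your identification of $L(\Gamma(\mathbb{Z}_{pq}))$ with the rook's graph on a $(p-1)\times(q-1)$ grid is exactly the structure the paper works with (it lists the vertices $(np,mq)$ and observes that each fixed $m$, i.e.\ each row, induces a clique), and on the lower-bound side you are actually more complete than the paper, which simply asserts the value $p-1$ after exhibiting the $p-1$ row cliques and offers no argument that fewer vertices cannot suffice. Read as a computation of the domination number --- closed neighborhoods required to cover all \emph{vertices} --- your diagonal construction plus the pigeonhole argument is correct and self-contained.

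The gap is in the upper bound, relative to the definition the paper itself states: a neighborhood set must satisfy $L(\Gamma(\mathbb{Z}_n))=\bigcup_{v\in A}\langle N[v]\rangle$, a union of \emph{induced} subgraphs, so every edge (not just every vertex) must lie inside some closed neighborhood. A column edge $\{(j,i),(j',i)\}$ with $j\ne j'$ lies in $\langle N[v]\rangle$ only if $v$ itself lies in column $i$: a vertex outside that column can share a row with at most one of the two endpoints, so it cannot contain both in its closed neighborhood. Consequently your diagonal $A=\{(j,j):1\le j\le p-1\}$ leaves every column edge in columns $p,\dots,q-1$ uncovered, so for $p\ge 3$ it is not a neighborhood set in the stated sense; in fact any neighborhood set must then meet all $p-1$ rows and all $q-1$ columns, forcing size at least $q-1>p-1$. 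So either ``neighborhood set'' is read as ``dominating set'' (vertex coverage only), in which case your proof is fine and sharper than the paper's sketch, or the claimed value $p-1$ itself fails for $3\le p<q$ (only the case $p=2$, where the rook's graph degenerates to $K_{q-1}$ and there are no column edges, is unproblematic). Your write-up needs to confront this edge-coverage requirement explicitly; checking vertex coverage alone does not discharge it.
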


\begin{proof} Let $n = pq$ be the product of primes and $p<q$. Then $\Gamma(\mathbb{Z}_n)$ is a bipartite graph $K_{p-1, q-1}$ such that its line graph $L(\Gamma(\mathbb{Z}_n))$ is a regular graph of $(p-1)(q-1)$ vertices and degree of each vertex is $p+q+4$. The vertices of $L(\Gamma(\mathbb{Z}_n))$ are
	\begin{center}
		$(p, q), (2p, q), (3p, q),...,((q-1)p, q)$\\
		$(p, 2q), (2p, 2q), (3p, 2q),...,((q-1)p, 2q)$\\
		\hspace{-5cm}.\\
		\hspace{-5cm}.\\
		\hspace{-5cm}.\\
		$(p, (p-1)q), (2p, (p-1)q), (3p, (p-1)q),...,((q-1)p, (p-1)q),$
	\end{center}
	in which for every fixed $m$ and $1 \leq n \leq q-1$, $(np, mq)$ forms a complete graph and also the line graph $L(\Gamma(\mathbb{Z}_n))$ is connected. Hence, the neighbourhood number $n(L(\Gamma(\mathbb{Z}_n))) = p-1$.
	
\end{proof}

 \begin{theorem} If $n$ = $pq$ or $p^2$, then $L(\Gamma(\mathbb{Z}_n))$ is an integral graph.
\end{theorem}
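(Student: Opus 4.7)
The strategy is to invoke the explicit characteristic polynomial computations already carried out in Theorems 3.1--3.4 and simply observe that in each case every root of the polynomial is an integer, so no new spectral work is needed.

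First I would split into the two regimes of the hypothesis: $n = pq$ with $p \neq q$ prime, and $n = p^{2}$ with $p$ prime. In the $n = pq$ regime, the relevant subcases have already been resolved: for $p = 2$ (Theorem 3.1) the eigenvalues are $-1$ and $q-2$; for $p = 3$ with $q > 3$ (Theorem 3.2) they are $0, -2, q-3, q-1$; and for $2 < p < q$ (Theorem 3.3) they are $-2, p-3, q-3, p+q-4$. In every subcase the spectrum lies in $\mathbb{Z}$, so $L(\Gamma(\mathbb{Z}_{pq}))$ is integral by definition.

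For $n = p^{2}$, Theorem 3.4 handles $p \geq 5$, producing the integer spectrum $\{2(p-3),\; p-5,\; -2\}$. The residual small primes $p = 2$ and $p = 3$ must be disposed of by hand: when $p = 2$, $\mathbb{Z}_{4}$ has the single non-zero zero-divisor $2$, so $\Gamma(\mathbb{Z}_{4})$ is an isolated vertex and $L(\Gamma(\mathbb{Z}_{4}))$ is empty (vacuously integral); when $p = 3$, $\Gamma(\mathbb{Z}_{9})$ consists of the single edge $\{3,6\}$, so $L(\Gamma(\mathbb{Z}_{9}))$ is the one-vertex graph with sole eigenvalue $0$.

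No substantive obstacle arises, since the heavy lifting -- factoring the characteristic polynomial into linear factors over $\mathbb{Z}$ -- was performed in the preceding four theorems. The only care required is to make the case analysis exhaustive and to verify, by direct inspection of each displayed factorization, that every linear factor has an integer root. With that bookkeeping in place, the theorem follows immediately by concatenating the conclusions of Theorems 3.1--3.4 with the two trivial degenerate cases.
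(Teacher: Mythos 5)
Your proposal takes essentially the same route as the paper's own proof: both arguments simply read off the spectra already exhibited in the energy computations (Theorems 3.1--3.4) for $n = 2q$, $3q$, $pq$ with $2<p<q$, and $p^{2}$, and observe that every eigenvalue listed there is an integer. If anything, you are slightly more complete than the paper, since you also dispose of the degenerate cases $p = 2$ and $p = 3$ for $n = p^{2}$ (where $L(\Gamma(\mathbb{Z}_{4}))$ is empty and $L(\Gamma(\mathbb{Z}_{9}))$ is a single vertex), which the paper's proof, resting on the $p \geq 5$ hypothesis of Theorem 3.4, passes over in silence.
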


\begin{proof} {\bf Case(1):} If $q>2$ is any prime number and $p = 2$, then the line graph $L(\Gamma(\mathbb{Z}_{2q}))$ is a complete graph with $(q-1)$ vertices. In this case $L(\Gamma(\mathbb{Z}_{2q}))$ has adjacency matrix $A$ = $J - I$, where matrix $J$ has all entries $1$ and $I$ is the identity matrix. Since, $j$ has spectrum $(q-2)^1$, $0^{q-2}$ and $I$ has spectrum $-1^{q-1}$ and $IJ = JI$, it follows that $L(\Gamma(\mathbb{Z}_{2q}))$ has spectrum $(q-2)^1$, $(-1)^{q-2}$. This proves that all eigenvalues are integers.\\
	
	\noindent {\bf Case(2):} If $q > 3$ is a prime number and $p = 3$, then $L(\Gamma(\mathbb{Z}_{3q}))$ is a $(q-1)$-regular graph with $2(q-1)$ vertices. Also, each row of the adjacency matrix $A$ of $L(\Gamma(\mathbb{Z}_{3q}))$ has $(q-1)$ ones. Therefore, $A$ has spectrum $0^{q-2}$, $(-2)^{q-2}$, $(q-p)^1$ and $(q-1)^1$. Clearly all the eigenvalues are integers and multiplicities of these eigenvalues are non-negative integers.\\
	
	Therefore, $L(\Gamma(\mathbb{Z}_{n}))$ is an integral graph for $n = 2q$ and $3q$, where $q$ is any prime number. Similarly, we continue above procedure and finally prove that $L(\Gamma(\mathbb{Z}_{n}))$ is an integral graph for $n = pq$, where $p$ and $q$ are distinct prime numbers.\\
	
	\noindent {\bf Case(3):} If $n = p^2$, $p$ is a prime number, then $L(\Gamma(\mathbb{Z}_{p^2}))$ is a $2(p-3)$-regular graph with $\dfrac{(p-1)(p-2)}{2}$ vertices. Also, each row of the adjacency matrix $A$ of $L(\Gamma(\mathbb{Z}_{p^2}))$ has $2(p-3)$ ones, so that spectrum are $2(p-3)$, $(p-5)^{p-2}$ and $(-2)^\frac{(p-1)(p-4)}{2}$. Clearly, all eigenvalues have integeral values. Hence, $L(\Gamma(\mathbb{Z}_{p^2}))$ is an integral graph.
\end{proof}

\begin{rk}
	Note that our graphs are undirected, therefore, the matrices are symmetric and all eigenvalues are real and non-fractional. So, all eigenvalues of graph $L(\Gamma(\mathbb{Z}_n))$, for $n = pq$ and $p^{2}$ are integers. Moreover, our graphs do not have loops so that the diagonal entries of matrices are zero. Thus, trace of the matrix is zero and sum of eigenvalues is also zero.
\end{rk}

\section{Matlab coding of Line Graph}
Here, we present an algorithm for constructing the line graph of $\Gamma(\mathbb{Z}_n)$ and for calculating energy and Wiener index of this graph. This algorithm contains several sub-algorithms. It is sufficient to input $n$. In the first stage of algorithm, we get matrix of the line graph and plot $L(\Gamma(\mathbb{Z}_n)) = A$ by function $LG(p)$. At the second stage, we calculate Energy index by using $E = sum(abs(eg))$ and Wiener index by function $ Wiener(A)$.\\

$function A = LG(p)$

$n = p;$

$M = [];$

$for \ i = 1:n-1$

$for \ j = 1:n-1$

$if \ mod(i*j,n)==0$

$M = [M,i];$

$break;$

$end$

$end$

$end$

$M$

$n = length(M);$

$Nz = zeros(n);$

$for \ i = 1:n-1$

$for \ j = i+1:n$

$if \ mod(M(i)*M(j),p)==0$

$Nz(i,j) = 1; Nz(j,i) = 1;$

$end$

$end$

$end$

$[row col] = find(triu(Nz))$

$Points = [row col]$

$[nr nc] = size(Points)$

$A = zeros(nr);$

$for \ i = 1:length(A)$

$axes(i+1,:) = [cos(pi*i/nr),\ sin(pi*i/nr)];$

$end$

$hold on$

$for \ i = 1:nr$

$for \ j = 1:nr$

$if \ i ~= j$

$plot(axes(i,1),\ axes(i,2),\ '*')$

$if intersect(Points(i,:),\ Points(j,:)) ~= 0$

$A(i,j) = 1;$

$plot(axes([i,j],1),\ axes([i,j],2));$

$end$

$end$

$end$

$end$

$eg = eig(A);$

$E = sum(abs(eg))$

$Dist = Wiener(A)$

$sum(sum(Dist))/2$

$function s = Wiener(B)$

$B(B==0) = inf;$

$A = triu(B,1)+\ tril(B,-1);$

$m = length(A);$

$B = zeros(m);$

$j = 1;$

$while \ j <= m$

$for \ i = 1:m$

$for \ k = 1:m$

$B(i,k) = min(A(i,k),\ A(i,j)+\ A(j,k));$

$end$

$end$

$A = B;$

$j = j+1;$

$end$

$s = A;$
\section{ Conclusion } In this paper, we have studied the adjacency matrix, neighborhood and related properties such as determinant, rank, trace and eigenvalues for  $L(\Gamma(\mathbb{Z}_n))$, where $n$ is the products of primes. Also, we have discussed the energy and Wiener index for $pq$ and $p^2$ of the line graph of zero divisor graph of ring $\mathbb{Z}_n$. Furthermore, we have given some formulae for Wiener index and energy of line graph and found that above discussed graphs for $n$ = $pq$ and $p^2$ are integral graphs.

\section*{Acknowledgment}
The authors are thankful to Council of Scientific and Industrial Research (CSIR), Govt. of India for financial support under Ref. No. 22/06/2014(i)EU-V, Sr. No. 1061440753 dated 29/12/2014 and Indian Institute of Technology Patna for providing the research facilities.

\end{document}